\theoremstyle{plain}
\newtheorem{theorem}{Theorem}[section]
\theoremstyle{definition}
\newtheorem{defn}[theorem]{Definition}
\newtheorem{example}[theorem]{Example}
\newtheorem{corollary}[theorem]{Corollary}
\numberwithin{equation}{section}
\newcommand{\R}{\mathbb{R}}
\begin{document}

\title[A parameter-dependent cantilever-type BVP]{On the solvability of a parameter-dependent cantilever-type BVP} 

\date{}

\author[G. Infante]{Gennaro Infante}
\address{Gennaro Infante, Dipartimento di Matematica e Informatica, Universit\`{a} della
Calabria, 87036 Arcavacata di Rende, Cosenza, Italy}%
\email{gennaro.infante@unical.it}%

\begin{abstract} 
We discuss the solvability of a parameter dependent cantilever-type boundary value problem. We provide an existence and localization result for the positive solutions via a Birkhoff-Kellogg type theorem. We also obtain, under additional growth conditions, upper and lower bounds for the involved parameters. An example is presented in order to illustrate the theoretical results.
\end{abstract}

\subjclass[2020]{Primary 34B08, secondary 34B10, 34B18, 47H30}

\keywords{Positive solution, functional boundary condition, cone, Birkhoff-Kellogg type theorem}

\maketitle

\section{Introduction}

Differential equations have been utilized to model the steady states of deflections of elastic beams; for example the fourth order ordinary differential equation
\begin{equation}\label{de}
u^{(4)}(t)=f(t,u(t)),\ t \in (0,1),
\end{equation}
subject to the homogeneous boundary conditions (BCs)
\begin{equation}\label{debc}
u(0)=u'(0)=u''(1)=u'''(1)=0,
\end{equation}
can be used as a model for the so-called cantilever bar.  
The boundary value problem (BVP)~\eqref{de}-\eqref{debc} describes a bar of length $1$ which is clamped on the
left end and is free to move at the right end, with vanishing
bending moment and shearing force, see for example~\cite{da-jh, li, yao-cant}.

Under a mechanical point of view, some interesting cases appear when the shearing force at the right side of the beam does not vanish (see for example~\cite{gipp-cantilever}):
\begin{itemize}
\item $u'''(1)+k_0=0$ models a force acting in 1,
\item $u'''(1)+k_1 u(1)=0$ describes a spring in 1,
\item $u'''(1)+g(u(1))=0$ models
a spring with a strongly nonlinear rigidity,
\item $u'''(1)+g(u(\eta))=0$ describes
a feedback mechanism, where the spring reacts to the displacement registered in a point $\eta$ of the beam,
\item $u'''(1)+g(u(\eta_1), u'(\eta_2), u''(\eta_3))=0$ describes the case where the spring reacts to the displacement registered in the point~$\eta_1$, the angular attitude registered in the point~$\eta_2$ and the bending moment in the point~$\eta_3$. 
\end{itemize}
Of course, a different configuration of the beam may lead to more complicated  BCs than the ones illustrated above. It is therefore not surprising that the case of  non-homogeneous BCs has received attention by researchers. By means of critical point theory, Cabada and Terzian~\cite{cate} and Bonanno,  Chinn\`i and Terzian~\cite{gbacst} and Yang, Chen and Yang~\cite{ycy-aml}
studied the parameter-dependent BVP
\begin{equation*}
\begin{cases}
u^{(4)}(t)=\lambda f(t,u(t)),\ t \in (0,1),\\ 
u(0)=u'(0)=u''(1)=0,\\ u'''(1)+\lambda g(u(1))=0,
\end{cases}
\end{equation*}
while the case of $\lambda=1$ has been investigated in an earlier paper by Ma~\cite{tofu}.

By classical fixed point index, Cianciaruso, Infante and Pietramala~\cite{genupa} studied the BVP
\begin{equation*}
\begin{cases}
u^{(4)}(t)= f(t,u(t)),\ t \in (0,1),\\ 
u(0)=u'(0)=u''(1)=0,\\ u'''(1)+ \hat{H}(u)=0,
\end{cases}
\end{equation*}
where $\hat{H}$ is a suitable functional (not necessarily linear) on $C[0,1]$. The functional approach for the BCs adopted in~\cite{genupa} fits within the interesting framework of nonlinear and nonlocal BCs; these are widely studied objects, we refer the reader to the reviews~\cite{Cabada1, Conti, rma, Picone, sotiris, Stik, Whyburn} and the manuscripts~\cite{Chris-bj, kttmna, jw-gi-jlms}.

Regarding the higher order dependence in the forcing term, the ODE
$$
u^{(4)}(t)=f(t,u(t),u'(t),u''(t),u'''(t)), \quad t\in [0,1]
$$
under the homogeneous BCs
$$
u(0)=u'(0)=u''(1)=u'''(1)=0
$$
has been studied by Li~\cite{li} via fixed point index.
The non-homogeneous case
$$u(0)=u'(0)=u''(1)=0, u'''(1)+g(u(1))=0 $$ 
has been studied, with the lower and upper solutions method, by Wei, Li and Li~\cite{mwylgl21}, while
the case
$$u(0)=u'(0)=\int_0^1 p(t)u(t)\, dt, u''(1)=u'''(1)=\int_0^1 q(t)u''(t)\, dt,$$ 
has been investigated by Khanfer and Bougoffa~\cite{aklb21}
via the Schauder fixed point theorem.

Here we study the solvability of the parameter-dependent BVP
\begin{equation}\label{CantPar}
\begin{cases}
u^{(4)}(t)=\lambda f(t,u(t),u'(t),u''(t),u'''(t)),\ t \in (0,1),\\ 
u(0)=u'(0)=u''(1)=0,\\ u'''(1)+\lambda H[u]=0,
\end{cases}
\end{equation}
where $f$ is a continuous function, $H$ is a suitable compact functional in the space $C^3[0,1]$ (this allows higher order dependence in the BCs) and $\lambda$ is a non-negative parameter. For the existence result we adapt an approach used by the author~\cite{gi-BK}, in the context of elliptic systems, that relies on a Birkhoff-Kellogg type theorem in cones due to Krasnosel'ski\u{i} and Lady\v{z}enski\u{\i}~\cite{Kra-Lady}. We also provide, under additional growth conditions, a localization result for the parameter $\lambda$. The results are new and complement the ones present in the papers~\cite{gbacst, cate, genupa, aklb21, li, tofu, mwylgl21,  ycy-aml}. We also complement the results in~\cite{gi-ptrsa21}, by obtaining additional qualitative properties (such as monotonicity and localization) of the solution. We illustrate the applicability of our theoretical results in an example.
\section{Existence and localization of the eigenvalues}
First of all we associate to  the BVP~\eqref{CantPar} a perturbed Hammerstein integral equation of the form
\begin{equation}\label{chareq}
u(t)=\lambda \Bigl( \gamma(t)H[u]+ \int_{0}^{1} k(t,s) f(s,u(s),u'(s),u''(s),u'''(s))\,ds\Bigr),
\end{equation}
where the Green's function $k$ and the function $\gamma$ need to be determined; this is done by considering two auxiliary BVPs, a procedure found to be particularly useful in the case of nonlinear BCs, see for example~\cite{gi-mc} and references therein.

Regarding $k$ it is known (see for example Lemma~2.1 and Lemma~2.2 of~\cite{li}) that for $h\in C[0,1]$ the unique solution of the linear BVP
$$
\begin{cases}
u^{(4)}(t)=h(t),\ t \in (0,1),\\ u(0)=u'(0)=u''(1)=u'''(1)=0,
\end{cases}
$$
is given by
\begin{equation*}
u(t)=\int_{0}^{1} k(t,s) h(s)\,ds,
\end{equation*}
where
$$
 k(t,s)=\begin{cases} \frac{1}{6}(3t^2s-t^3),\ &s\geq t,\\ \frac{1}{6}(3s^2t-s^3),\ &s\leq t.
\end{cases}
 $$
Note that the function $k$ has the following properties
$$
k(t,s), \frac{\partial k}{\partial t}(t,s) , \frac{\partial^2 k}{\partial t^2}(t,s) \geq 0\ \text{on}\ [0,1]\times [0,1],  
$$
and
$$
\frac{\partial^3 k}{\partial t^3}(t,s) \leq 0\ \text{on}\ [0,1]^2\setminus \{(t,s)|t=s)\}.
$$

Regarding the function $\gamma$, note that 
(see  for example \cite{gipp-cantilever})
\begin{equation*}
\gamma (t)=\frac{1}{6}(3t^2-t^3)
\end{equation*} is the unique solution of the BVP
\begin{equation*}
\gamma^{(4)}(t)=0,\ \gamma(0)=\gamma'(0)=\gamma''(1)=0,\; \gamma'''(1)+1=0.
\end{equation*}
By direct calculation, it can be observed that
$$\gamma(t), \gamma'(t), \gamma''(t), -\gamma'''(t)\geq 0\ \text{on}\ [0,1].$$

With the above ingredients at our disposal, we can work in the space $C^3[0,1]$ endowed with the norm
$$\|u\|_3:=\max_{j=0,\ldots, 3}\{\|u^{(j)}\|_\infty\},\ \text{where}\ \|w\|_\infty=\sup_{t\in [0,1]}|w(t)|.$$

\begin{defn} We say that $\lambda$ is an~\emph{eigenvalue} of the BVP~\eqref{CantPar} with a corresponding eigenfunction  
 $u\in C^3[0,1]$ with $\|u\|_3>0 $ if the pair $(u,\lambda)$ satisfies the perturbed Hammerstein integral equation~\eqref{chareq}.
\end{defn}

We make use of the following Birkhoff-Kellogg type theorem in order to seek the eigenfunctions of the BVP~\eqref{CantPar}.
We recall that a cone $\mathcal{K}$ of a real Banach space $(X,\| \, \|)$ is a closed set with $\mathcal{K}+\mathcal{K}\subset \mathcal{K}$, $\mu \mathcal{K}\subset \mathcal{K}$ for all $\mu\ge 0$ and $\mathcal{K}\cap(-\mathcal{K})=\{0\}$. 
\begin{theorem}[Theorem~2.3.6,~\cite{guolak}]\label{B-K}
Let $(X,\| \, \|)$ be a real Banach space, $U\subset X$ be an open bounded set with $0\in U$, 
$\mathcal{K}\subset X$ be a cone, $T:\mathcal{K}\cap \overline{U}\to \mathcal{K}$
be compact and suppose that 
$$\inf_{x\in \mathcal{K}\cap \partial U}\|Tx\|>0.$$
Then there exist $\lambda_{0}\in (0,+\infty)$ and $x_{0}\in \mathcal{K}\cap \partial U$
such that $x_{0}=\lambda_{0} Tx_{0}$.
\end{theorem}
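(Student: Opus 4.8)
The plan is to prove this by contradiction, using the fixed point index $i_{\mathcal K}(\cdot,\mathcal K\cap U)$ of compact maps on the cone $\mathcal K$, as developed in \cite{guolak}. I will use three standard facts about it: homotopy invariance (a homotopy of compact maps $\mathcal K\cap\overline U\to\mathcal K$ with no fixed point on $\mathcal K\cap\partial U$ does not change the index), the normalisation $i_{\mathcal K}(0,\mathcal K\cap U)=1$ (valid since $0\in\mathcal K\cap U$), and the index-zero criterion: if there is $e\in\mathcal K\setminus\{0\}$ with $x\neq Ax+\tau e$ for every $x\in\mathcal K\cap\partial U$ and every $\tau\geq0$, then $i_{\mathcal K}(A,\mathcal K\cap U)=0$. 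For the set-up, note that $0\notin\partial U$ (since $0\in U$ and $U$ is open), that $R:=\sup_{x\in\overline U}\|x\|<\infty$ (since $U$ is bounded), and that $m:=\inf_{x\in\mathcal K\cap\partial U}\|Tx\|>0$ by hypothesis; we fix $e\in\mathcal K$ with $\|e\|=1$ (here we tacitly assume $\mathcal K\neq\{0\}$, the statement being understood to presuppose $\mathcal K\cap\partial U\neq\emptyset$).

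Assume, towards a contradiction, that $x\neq\lambda Tx$ for all $\lambda\in(0,+\infty)$ and all $x\in\mathcal K\cap\partial U$. Then, for each fixed $\mu>0$, the homotopy $h(t,x)=t\mu\,Tx$, $t\in[0,1]$, has no fixed point on $\mathcal K\cap\partial U$: for $t>0$ this is precisely the assumption with $\lambda=t\mu$, and for $t=0$ it holds because $0\notin\partial U$. Hence $h$ is admissible and $i_{\mathcal K}(\mu T,\mathcal K\cap U)=i_{\mathcal K}(0,\mathcal K\cap U)=1$ for \emph{every} $\mu>0$.

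The heart of the matter is to show that, nevertheless, $i_{\mathcal K}(\mu T,\mathcal K\cap U)=0$ for all sufficiently large $\mu$, which already yields the sought contradiction. By the index-zero criterion it is enough to produce $\mu$ for which $x\neq\mu Tx+\tau e$ holds on $\mathcal K\cap\partial U$ for every $\tau\geq0$. Were this to fail for all $\mu$, there would exist $\mu_n\to+\infty$, points $x_n\in\mathcal K\cap\partial U$ and scalars $\tau_n\geq0$ with $x_n=\mu_n Tx_n+\tau_n e$; the possibility $\tau_n=0$ is excluded by the contradiction assumption, so $\tau_n>0$ and
\[
\tau_n=\|x_n-\mu_n Tx_n\|\geq\mu_n\|Tx_n\|-\|x_n\|\geq\mu_n m-R\longrightarrow+\infty .
\]
Dividing the identity $x_n=\mu_n Tx_n+\tau_n e$ by $\tau_n$ and using $\|x_n\|\leq R$ gives $\frac{\mu_n}{\tau_n}Tx_n=\frac{1}{\tau_n}x_n-e\to-e$; since $\frac{\mu_n}{\tau_n}Tx_n\in\mathcal K$ for every $n$ and $\mathcal K$ is closed, it follows that $-e\in\mathcal K$, whence $e\in\mathcal K\cap(-\mathcal K)=\{0\}$, contradicting $\|e\|=1$. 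This establishes the claim, so $i_{\mathcal K}(\mu T,\mathcal K\cap U)=0$ for large $\mu$, contradicting the value $1$ found above. Therefore the contradiction assumption is untenable, and there are $\lambda_0\in(0,+\infty)$ and $x_0\in\mathcal K\cap\partial U$ with $x_0=\lambda_0 Tx_0$; moreover $x_0\neq0$ because $0\notin\partial U$.

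The only genuinely delicate step I anticipate is the index-zero computation for large $\mu$: one has to recognise that the correct perturbation direction for the criterion is a unit vector $e$ lying inside the cone, and then extract the contradiction $-e\in\mathcal K$ after renormalising by the (necessarily unbounded) translation amounts $\tau_n$; this is the place where the closedness and the pointedness of $\mathcal K$, together with the uniform lower bound $m>0$ on $\|Tx\|$ over $\mathcal K\cap\partial U$, are invoked at once. Everything else --- the homotopy to the zero map, the normalisation of the index, and the elementary norm estimates --- is routine; the compactness of $T$ is used, as usual, only to guarantee that the fixed point index is defined.
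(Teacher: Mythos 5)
Your argument is correct. Note first that the paper does not actually prove this statement: it is imported verbatim as Theorem~2.3.6 of~\cite{guolak}, so there is no in-paper proof to compare against. What you have written is a sound, self-contained derivation via the fixed point index in cones, and it is essentially the classical route (the one underlying the Krasnosel'ski\u{i}--Lady\v{z}enski\u{\i} result and its treatment in~\cite{guolak}): homotope $\mu T$ to the zero map to get index $1$ for every admissible $\mu$, then force index $0$ for large $\mu$ via the standard criterion with a perturbation $e\in\mathcal K\setminus\{0\}$. All the delicate points are handled properly: the reverse triangle inequality gives $\tau_n\ge \mu_n m-R\to+\infty$, the renormalisation by $\tau_n$ together with closedness and pointedness of $\mathcal K$ yields the contradiction $-e\in\mathcal K$, and this cleverly avoids any normality assumption on the cone (a cruder estimate of the form $\|\mu Tx+\tau e\|\ge\|\mu Tx\|$ would have required it). The only caveats, which you already flag, are the tacit hypotheses $\mathcal K\neq\{0\}$ and $\mathcal K\cap\partial U\neq\emptyset$ (otherwise the infimum is over the empty set and the conclusion is vacuous); it would be worth stating explicitly that $\mathcal K$, being closed and convex, is a retract of $X$, so the index $i_{\mathcal K}$ is indeed defined.
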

We apply the Theorem~\ref{B-K} in the cone 
\begin{equation}\label{con-cant}
K:=\bigl\{u\in  C^3[0,1]:\ u(t),u'(t),u''(t), -u'''(t)\ge 0, \ \text{for every}\ t\in [0,1]\bigr\}.
\end{equation}
The cone~\eqref{con-cant} is a smaller cone than the one of positive functions used in~\cite{gi-ptrsa21}, but larger than the one used in~\cite{li}. We consider the sets
$$K_{\rho}:=\{u\in K: \|u\|_3<\rho\},\,
\overline{K}_{\rho}:=\{u\in K: \|u \|_3\leq \rho\},
$$ 
$$
\partial
K_{\rho}:=\{u\in K: \|u\|_3=\rho\},$$
where $\rho \in (0,+\infty)$.

The following Theorem provides an existence result for an eigenfunction possessing a fixed norm and a corresponding positive eigenvalue.
\begin{theorem}\label{eigen}Let $\rho \in (0,+\infty)$ and assume the following conditions hold. 
\begin{itemize}

\item[$(a)$] 
$f\in C(\Pi_{\rho}, \R)$ and there exist $\underline{\delta}_{\rho} \in C([0,1],\R_+)$ such that
\begin{equation*}
f(t,u,v,w,z)\ge \underline{\delta}_{\rho}(t),\ \text{for every}\ (t,u,v,w,z)\in \Pi_{\rho},
\end{equation*}
where $$\Pi_{\rho}:[0,1]\times [0,\rho]^3 \times [-\rho,0].$$ 

\item[$(b)$] 
$H: \overline{K}_{\rho} \to \R$ is continuous and bounded. Let $\underline{\eta}_{\rho}\in [0,+\infty)$ be such that 
\begin{equation*}
H[u]\geq \underline{\eta}_{\rho},\ \text{for every}\ u\in \partial K_{\rho}.
\end{equation*}
\item[(c)] 
The inequality 
\begin{equation}\label{condc}
\underline{\eta}_{\rho}+\int_{0}^{1}\underline{\delta}_{\rho} (s)\,ds>0 
\end{equation}
holds.
\end{itemize}
Then the BVP~\eqref{CantPar} has a positive eigenvalue $\lambda_\rho$ with an associated eigenfunction $u_{\rho}\in \partial K_{\rho}$.

\end{theorem}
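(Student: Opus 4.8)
The plan is to apply Theorem~\ref{B-K} with $X = C^3[0,1]$, with the cone $\mathcal{K} = K$ defined in~\eqref{con-cant}, and with $U = K_\rho$, so that $\partial U \cap K = \partial K_\rho$. The operator $T$ will be defined for $u \in \overline{K}_\rho$ by
\begin{equation*}
Tu(t) := \gamma(t) H[u] + \int_0^1 k(t,s)\, f\bigl(s,u(s),u'(s),u''(s),u'''(s)\bigr)\, ds,
\end{equation*}
so that a solution $u_\rho = \lambda_\rho T u_\rho$ with $u_\rho \in \partial K_\rho$ is precisely an eigenfunction with eigenvalue $\lambda_\rho$ in the sense of the Definition. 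Note that for $u \in \overline{K}_\rho$ one has $\|u\|_3 \le \rho$, hence $(s,u(s),u'(s),u''(s),u'''(s)) \in \Pi_\rho$ for all $s$, so the composition in the integrand is well-defined and continuous by~$(a)$, and $H[u]$ is defined by~$(b)$.

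The three things to verify are: (i) $T$ maps $\overline{K}_\rho$ into $K$; (ii) $T$ is compact on $\overline{K}_\rho$; and (iii) $\inf_{u \in \partial K_\rho} \|Tu\|_3 > 0$. For (i), I would differentiate $Tu$ up to third order under the integral sign, using that $\partial^j_t k \ge 0$ for $j = 0,1,2$ and $\partial^3_t k \le 0$ off the diagonal, together with $\gamma, \gamma', \gamma'' \ge 0$ and $-\gamma''' \ge 0$ on $[0,1]$; since $H[u] \ge 0$ on $\partial K_\rho$ — and more generally we only need the sign of $H$ to cooperate, which is guaranteed on $\partial K_\rho$ where $\underline\eta_\rho \ge 0$ — and $f \ge \underline\delta_\rho \ge 0$, each of $Tu, (Tu)', (Tu)'', -(Tu)'''$ is nonnegative, so $Tu \in K$. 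For (ii), compactness follows from the usual Ascoli–Arzel\`a argument: $f$ is bounded on the compact set $\Pi_\rho$, $H$ is bounded on $\overline{K}_\rho$ by~$(b)$, and $k$ together with its $t$-derivatives up to order three is (piecewise) continuous and bounded on $[0,1]^2$, so $\{Tu : u \in \overline{K}_\rho\}$ is bounded in $C^3[0,1]$ and the third derivatives are equicontinuous (the only subtlety is the jump of $\partial^3_t k$ across $t=s$, but this is integrated against a bounded function, so $(Tu)'''$ is still uniformly continuous with a modulus independent of $u$); continuity of $T$ is routine from continuity of $f$ and $H$.

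For (iii), the key estimate is a lower bound on $\|Tu\|_3$ that is uniform over $u \in \partial K_\rho$. Using condition~$(b)$ and~$(a)$, for any $u \in \partial K_\rho$ and any $t$,
\begin{equation*}
(Tu)(t) \ge \gamma(t)\,\underline\eta_\rho + \int_0^1 k(t,s)\,\underline\delta_\rho(s)\, ds,
\end{equation*}
and similarly for $(Tu)'(t)$ with $\gamma', \partial_t k$ in place of $\gamma, k$. It suffices to bound one of the components $\|(Tu)^{(j)}\|_\infty$ from below by a positive constant. I expect to evaluate at a convenient point — e.g.\ $t=1$, where $\gamma(1) = \gamma'(1) = \tfrac{1}{3} > 0$ — to get
\begin{equation*}
\|Tu\|_3 \ge (Tu)(1) \ge \tfrac{1}{3}\underline\eta_\rho + \int_0^1 k(1,s)\,\underline\delta_\rho(s)\, ds,
\end{equation*}
and since $k(1,s) = \tfrac{1}{6}(3s^2 - s^3) > 0$ for $s \in (0,1]$, condition~$(c)$ forces the right-hand side to be strictly positive — indeed if $\underline\eta_\rho = 0$ then $\int_0^1 \underline\delta_\rho > 0$ by~\eqref{condc}, and $\underline\delta_\rho \ge 0$ continuous with positive integral gives $\int_0^1 k(1,s)\underline\delta_\rho(s)\, ds > 0$. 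This lower bound is independent of $u$, so the infimum over $\partial K_\rho$ is positive. Then Theorem~\ref{B-K} yields $\lambda_\rho \in (0,+\infty)$ and $u_\rho \in \partial K_\rho$ with $u_\rho = \lambda_\rho T u_\rho$, which is the claim.

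The main obstacle I anticipate is step (ii), specifically checking equicontinuity of the third derivatives $(Tu)'''$ uniformly in $u$, because $\partial^3_t k(t,s)$ is only piecewise continuous with a discontinuity along the diagonal; one must argue that after integrating against the (uniformly) bounded function $s \mapsto f(s,u(s),\dots)$ the resulting function of $t$ is nevertheless uniformly continuous with a modulus of continuity that does not depend on $u$. Everything else is a bookkeeping exercise with the explicit signs of $k$, $\gamma$ and their derivatives listed in the excerpt.
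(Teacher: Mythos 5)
Your proposal is correct and takes essentially the same route as the paper: the same operator $Tu(t)=\gamma(t)H[u]+\int_0^1 k(t,s)f(s,u(s),\dots,u'''(s))\,ds$ on the same cone $K$, with Theorem~\ref{B-K} applied on $\partial K_{\rho}$. The only divergence is in the lower bound: the paper evaluates the third derivative at $0$, where $(Tu)'''(0)=-\bigl(H[u]+\int_0^1 f\bigr)$, so that $\|Tu\|_3\ge \underline{\eta}_{\rho}+\int_0^1\underline{\delta}_{\rho}(s)\,ds$ follows at once and condition~$(c)$ applies verbatim, whereas your evaluation of $(Tu)(1)$ needs the extra (correctly supplied) observation that $\int_0^1 k(1,s)\underline{\delta}_{\rho}(s)\,ds>0$ whenever $\int_0^1\underline{\delta}_{\rho}>0$; note also that the explicit formula $(Tu)'''(t)=-H[u]-\int_t^1 f$ makes the third derivatives uniformly Lipschitz, which disposes of the equicontinuity issue you flagged as the main obstacle (and, as a minor slip, $\gamma'(1)=\tfrac12$ rather than $\tfrac13$, though you only use $\gamma(1)=\tfrac13$).
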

\begin{proof}
Let $Fu(t):= \int_{0}^{1} k(t,s) f(s,u(s),u'(s),u''(s),u'''(s))\,ds$ and $\Gamma u(t):=\gamma(t)H[u]$. 
Note that, 
due to the assumptions above, the operator $T=F+\Gamma$ maps $\overline{K}_{\rho}$ into $K$ and is compact; the compactness of $F$ 
follows from a careful use of  the Arzel\`{a}-Ascoli theorem (see~\cite{Webb-Cpt}) and $\Gamma$ is a finite rank operator. 

Take $u\in \partial K_{\rho}$, then we have
\begin{multline}\label{lwest}
\| Tu\|_{3}\geq \| (Tu)'''\|_{\infty}\geq |(Tu)'''(0)|\\ =\bigl| -H[u] - \int_{0}^{1}   f(s, u(s) ,u'(s),u''(s), u'''(s))\,ds\bigr |\\
=H[u]+  \int_{0}^{1}   f(s, u(s) ,u'(s),u''(s), u'''(s))\,ds \geq \underline{\eta}_{\rho}+\int_{0}^{1}\underline{\delta}_{\rho} (s)\,ds.
\end{multline}

Note that the RHS of \eqref{lwest} does not depend on the particular $u$ chosen. Therefore we have
$$
\inf_{u\in \partial K_{\rho}}\| Tu\|_3 \geq \underline{\eta}_{\rho}+\int_{0}^{1}\underline{\delta}_{\rho} (s)\,ds>0,
$$
and the result follows by Theorem~\ref{B-K}.
\end{proof}
The following Corollary provides an existence result for the existence of uncountably many couples of eigenvalues--eigenfunctions.
\begin{corollary}
\label{CorEig}
In addition to the hypotheses of Theorem~\ref{eigen}, assume that $\rho$  can be chosen arbitrarily in $(0,+\infty)$. Then for every $\rho$ there exists a non-negative eigenfunction $u_{\rho}\in \partial K_{\rho}$ of the BVP~\eqref{CantPar} to which corresponds a $\lambda_{\rho} \in (0,+\infty)$.
\end{corollary}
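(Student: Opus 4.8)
The plan is simply to apply Theorem~\ref{eigen} once for each admissible value of the parameter. Fix an arbitrary $\rho\in(0,+\infty)$. By the standing assumption of the Corollary, hypotheses $(a)$, $(b)$ and $(c)$ of Theorem~\ref{eigen} hold for this particular $\rho$: namely $f\in C(\Pi_{\rho},\R)$ with lower bound $\underline{\delta}_{\rho}$ on $\Pi_{\rho}$, the functional $H$ is continuous and bounded on $\overline{K}_{\rho}$ with $H[u]\ge\underline{\eta}_{\rho}$ on $\partial K_{\rho}$, and the compatibility inequality~\eqref{condc} is satisfied. Theorem~\ref{eigen} then yields a positive eigenvalue $\lambda_{\rho}\in(0,+\infty)$ together with an eigenfunction $u_{\rho}\in\partial K_{\rho}$; since $u_{\rho}\in K$, it is in particular non-negative, and $\|u_{\rho}\|_3=\rho>0$.

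Because $\rho$ runs over the whole interval $(0,+\infty)$, this produces a family $\{(u_{\rho},\lambda_{\rho})\}_{\rho>0}$ of couples eigenvalue--eigenfunction of the BVP~\eqref{CantPar}. These couples are pairwise distinct: if $\rho_1\ne\rho_2$, then $\|u_{\rho_1}\|_3=\rho_1\ne\rho_2=\|u_{\rho_2}\|_3$, so $u_{\rho_1}\ne u_{\rho_2}$. Since $(0,+\infty)$ is uncountable, we obtain uncountably many such couples, which is the assertion.

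There is essentially no obstacle: the Corollary is an immediate consequence of Theorem~\ref{eigen}, the only thing to verify being that the hypotheses $(a)$--$(c)$ genuinely hold for every $\rho\in(0,+\infty)$ — and this is precisely what the additional assumption that ``$\rho$ can be chosen arbitrarily'' encodes (for instance, it is automatic when $f$ is continuous on $[0,1]\times\R^3\times\R$ and $H$ is globally defined, continuous and bounded on $K$, with suitable uniform positivity). It is worth stressing that nothing is claimed about the map $\rho\mapsto\lambda_{\rho}$: the eigenvalues $\lambda_{\rho}$ need not be distinct, so the conclusion concerns uncountably many couples rather than uncountably many eigenvalues.
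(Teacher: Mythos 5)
Your proposal is correct and matches the paper's intent exactly: the paper states Corollary~\ref{CorEig} without proof, treating it as the immediate consequence of applying Theorem~\ref{eigen} separately for each $\rho\in(0,+\infty)$, which is precisely your argument. Your added remarks on the distinctness of the pairs (via $\|u_{\rho}\|_3=\rho$) and on the fact that nothing is claimed about the map $\rho\mapsto\lambda_{\rho}$ are sensible clarifications consistent with the paper.
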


The next result provides some upper and lower bounds on the eigenvalues.

\begin{theorem}\label{eigenloc}
In addition to the hypotheses of Theorem~\ref{eigen} assume the following conditions hold.

\begin{itemize}
\item[$(d)$] 
There exist $\overline{\delta}_{\rho} \in C([0,1],\R_+)$ such that
\begin{equation*}
f(t,u,v,w,z)\le \overline{\delta}_{\rho}(t),\ \text{for every}\ (t,u,v,w,z)\in \Pi_{\rho}.
\end{equation*}

\item[$(e)$] Let $\overline{\eta}_{\rho}\in [0,+\infty)$ be such that 
\begin{equation*}
H[u]\leq \overline{\eta}_{\rho},\ \text{for every}\ u\in \partial K_{\rho}.
\end{equation*}
\end{itemize}
Then $\lambda_\rho$ satisfies the following estimates
$$
\frac{\rho}{\bigl( \overline{\eta}_{\rho}+ \int_{0}^{1}  \overline{\delta}_{\rho}(s)\,ds\bigr)}\leq \lambda_\rho\leq \frac{\rho}{\bigl( \underline{\eta}_{\rho}+ \int_{0}^{1}  \underline{\delta}_{\rho}(s)\,ds\bigr)}.
$$
\end{theorem}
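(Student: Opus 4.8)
My plan is to exploit the identity furnished by Theorem~\ref{eigen}: the pair $(u_\rho,\lambda_\rho)$ solves $u_\rho=\lambda_\rho Tu_\rho$ with $u_\rho\in\partial K_\rho$, where $T=F+\Gamma$ is the operator from that proof; applying $\|\cdot\|_3$ gives $\rho=\|u_\rho\|_3=\lambda_\rho\|Tu_\rho\|_3$, hence $\lambda_\rho=\rho/\|Tu_\rho\|_3$. The whole statement therefore reduces to the two-sided bound
\[
\underline{\eta}_{\rho}+\int_{0}^{1}\underline{\delta}_{\rho}(s)\,ds\ \le\ \|Tu\|_3\ \le\ \overline{\eta}_{\rho}+\int_{0}^{1}\overline{\delta}_{\rho}(s)\,ds\qquad\text{for }u\in\partial K_\rho .
\]
The left inequality is exactly~\eqref{lwest} from the proof of Theorem~\ref{eigen}, and it already gives the upper estimate $\lambda_\rho\le\rho/\bigl(\underline{\eta}_{\rho}+\int_0^1\underline{\delta}_{\rho}(s)\,ds\bigr)$, whose denominator is positive by~$(c)$. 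So the only genuinely new task is the right inequality.

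To establish it, I would write each derivative $(Tu)^{(j)}(t)$, $j=0,1,2,3$, as $\gamma^{(j)}(t)H[u]$ plus the integral over $[0,1]$ of $\partial^j k/\partial t^j(t,s)$ against $f(s,u(s),u'(s),u''(s),u'''(s))$. Using $H[u]\ge 0$ on $\partial K_\rho$ (from $(b)$), $f\ge\underline{\delta}_{\rho}\ge 0$ on $\Pi_\rho$ (from $(a)$), and the sign information recorded before the statement — $\gamma^{(j)},\ \partial^j k/\partial t^j\ge 0$ for $j=0,1,2$ — one sees $(Tu)^{(j)}\ge 0$ for $j\le 2$; an elementary check on the explicit formulas for $\gamma$ and $k$ gives moreover $0\le\gamma^{(j)}(t)\le 1$ and $0\le\partial^j k/\partial t^j(t,s)\le1$ for $j\le 2$, while for $j=3$ one has $(Tu)'''(t)=-H[u]-\int_t^1 f(s,u(s),u'(s),u''(s),u'''(s))\,ds$. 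Combining these facts, $|(Tu)^{(j)}(t)|$ is dominated by $H[u]+\int_0^1 f(s,u(s),u'(s),u''(s),u'''(s))\,ds$ for all $t$ and all $j$, so $\|Tu\|_3\le H[u]+\int_0^1 f(s,\ldots)\,ds$; and for $u\in\partial K_\rho$ one has $(u(s),u'(s),u''(s),u'''(s))\in[0,\rho]^3\times[-\rho,0]$, so conditions $(d)$ and $(e)$ turn this into $\|Tu\|_3\le\overline{\eta}_{\rho}+\int_0^1\overline{\delta}_{\rho}(s)\,ds$.

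Feeding the two-sided bound on $\|Tu_\rho\|_3$ into $\rho=\lambda_\rho\|Tu_\rho\|_3$ then yields the claimed inequalities at once. The main — and only mildly technical — obstacle is the verification of the uniform bounds $0\le\gamma^{(j)},\ \partial^j k/\partial t^j\le 1$ for $j\le2$ together with the computation of $\partial^3 k/\partial t^3$; once these are in hand, the rest is bookkeeping with the positivity properties already established.
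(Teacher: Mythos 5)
Your proposal is correct and follows essentially the same route as the paper: both arguments reduce to the identity $\rho=\lambda_\rho\bigl(H[u_\rho]+\int_0^1 f(s,u_\rho(s),\ldots,u_\rho'''(s))\,ds\bigr)$, obtained by showing that the $\|\cdot\|_3$-norm of the solution is attained by the third derivative via the bounds $0\le\gamma^{(j)},\ \partial^j k/\partial t^j\le 1$ for $j\le 2$, and then apply $(a)$, $(b)$, $(d)$, $(e)$ to the right-hand side. The only cosmetic difference is that you estimate $\|Tu\|_3$ directly rather than $\|u_\rho\|_3=\lambda_\rho\|Tu_\rho\|_3$, which is the same computation up to the scalar $\lambda_\rho$.
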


\begin{proof}
By Theorem~\ref{eigen} there exist $u_{\rho}\in \partial K_{\rho}$ and $\lambda_{\rho}$ such that
\begin{equation}\label{eigensol}
u_{\rho}(t)=\lambda_{\rho}\Bigl( \gamma(t) H[u_{\rho}]+ \int_{0}^{1} k(t,s) f(s,u_{\rho}(s),\ldots, u_{\rho}'''(s))\,ds\Bigr).
\end{equation}
By differentiating \eqref{eigensol} we obtain
\begin{align*}
u_{\rho}'(t)=&\lambda_{\rho}\Bigl( \gamma'(t)  H[u_{\rho}]+ \int_{0}^{1} \frac{\partial k}{\partial t}(t,s)  f(s,u_{\rho}(s),\ldots, u_{\rho}'''(s))  \,ds\Bigr),\\
u_{\rho}''(t)=&\lambda_{\rho}\Bigl( \gamma'' (t) H[u_{\rho}]+ \int_{0}^{1} \frac{\partial^2 k}{\partial t^2}(t,s)  f(s,u_{\rho}(s),\ldots, u_{\rho}'''(s))\,ds\Bigr),\\
u_{\rho}'''(t)=&\lambda_{\rho}\Bigl( -H[u_{\rho}]- \int_{t}^{1}   f(s,u_{\rho}(s),u_{\rho}'(s),u_{\rho}''(s), u_{\rho}'''(s))\,ds\Bigr),
\end{align*}
which implies
\begin{equation}\label{3norm}
\ \|u_{\rho}'''\|_\infty=\lambda_{\rho}\Bigl( H[u_{\rho}]+ \int_{0}^{1}  f(s,u_{\rho}(s),\ldots, u_{\rho}'''(s))\,ds\Bigr).
\end{equation}
Furthermore note that 
$$
0\leq k(t,s), \frac{\partial k}{\partial t}(t,s) , \frac{\partial^2 k}{\partial t^2}(t,s)\leq 1,\ \text{on}\ [0,1]\times [0,1],  
$$
and
$$0\leq \gamma(t), \gamma'(t), \gamma''(t)\leq 1\ \text{on}\ [0,1],$$
which yield
$$
\rho=\ \|u_{\rho}\|_3=\ \|u_{\rho}'''\|_\infty.
$$
From \eqref{3norm} and the estimates $(d)$ and $(e)$ we obtain
$$
\rho=\lambda_{\rho}\Bigl( H[u_{\rho}]+ \int_{0}^{1} f(s,u_{\rho}(s),\ldots, u_{\rho}'''(s))\,ds\Bigr)
\leq \lambda_{\rho} \bigl( \overline{\eta}_{\rho}+ \int_{0}^{1}  \overline{\delta}_{\rho}(s)\,ds\bigr),
$$
and 
$$
\rho=\lambda_{\rho}\Bigl( H[u_{\rho}]+ \int_{0}^{1}  f(s,u_{\rho}(s),\ldots, u_{\rho}'''(s))\,ds\Bigr)
 \geq \lambda_{\rho} \bigl( \underline{\eta}_{\rho}+ \int_{0}^{1}  \underline{\delta}_{\rho}(s)\,ds\bigr),
$$
which proves the result.
\end{proof}
We conclude with an example that illustrates the applicability of the previous theoretical results.
\begin{example}
Consider the BVP
\begin{equation} \label{example}
\begin{cases}
u^{(4)}(t)=\lambda te^{u(t)}(1+(u'''(t))^2),\ t \in (0,1),\\ 
u(0)=u'(0)=u''(1)=0,\\ u'''(1)+\lambda \Bigl(\frac{1}{1+(u(\frac{1}{2}))^2}+\int_0^1t^3u''(t)\, dt \Bigr)=0.
\end{cases}
\end{equation}
Fix $\rho\in (0,+\infty)$. Thus we may take 
$$\underline{\eta}_{\rho}(t)=\frac{1}{1+\rho^2},\, \overline{\eta}_{\rho}(t)=1+\frac{\rho}{4},\, \underline{\delta}_{\rho}(t)=t,\, \overline{\delta}_{\rho}(t)=te^{\rho}(1+\rho^2).$$ 

Thus we have
$$
\underline{\eta}_{\rho}+\int_{0}^{1}\underline{\delta}_{\rho} (s)\,ds=\frac{1}{1+\rho^2}+\int_0^1 s\, ds\geq \frac{1}{2},
$$
which implies that \eqref{condc} is satisfied for every $\rho \in (0,+\infty)$.

Thus we can apply Corollary~\ref{CorEig} and Theorem~\ref{eigenloc}, obtaining uncountably many pairs of positive eigenvalues and eigenfunctions $(u_{\rho}, \lambda_{\rho})$ for the BVP~\ref{example}, where 
$\|u_{\rho}\|_3=\|u_{\rho}'''\|_{\infty}=\rho$ and
$$
\frac{4 \rho}{2 {\mathrm e}^{\rho} \rho^{2}+2 {\mathrm e}^{\rho}+\rho +4}\leq \lambda_\rho\leq \frac{2 \rho  \left(\rho^{2}+1\right)}{\rho^{2}+3}.
$$
The Figure~\ref{Region} (produced with the program MAPLE) illustrates the region of localization of the $(u_{\rho}, \lambda_{\rho})$ pairs.
\begin{figure}[h]
\includegraphics[height=5.5cm,angle=0]{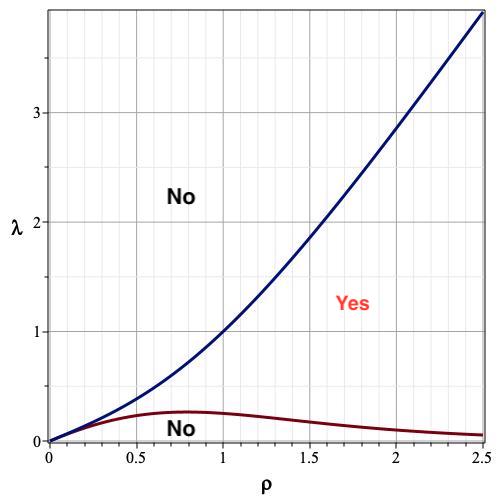}
\caption{Localization of $(u_{\rho}, \lambda_{\rho})$}\label{Region}
\end{figure}
\end{example}

\section*{Acknowledgement}
G. Infante was partially supported by G.N.A.M.P.A. - INdAM (Italy). This research has been accomplished within the UMI Group TAA “Approximation Theory and Applications”.


\begin{thebibliography}{99}

\bibitem{da-jh} D. R. Anderson and J. Hoffacker,
 Existence of solutions for a cantilever beam problem,
\textit{J. Math. Anal. Appl.}, \textbf{323} (2006), 958--973.

\bibitem{gbacst} G. Bonanno, A. Chinn\`i and S. Tersian,
Existence results for a two point boundary value problem involving a fourth-order equation,
\textit{Electron. J. Qual .Theory Differ. Equ.}, \textbf{33} (2015), 9pp.

\bibitem{Cabada1} A. Cabada, An overview of the lower and upper solutions method with nonlinear boundary value conditions, \textit{Bound. Value Probl.} (2011), Art. ID 893753, 18 pp.

\bibitem{cate} A. Cabada and S. Tersian,
Multiplicity of solutions of a two point boundary value problem for a fourth-order equation,
\textit{Appl. Math. Comput.}, \textbf{219} (2013), 5261--5267.

\bibitem{genupa}
F. Cianciaruso, G. Infante and P. Pietramala, Solutions of perturbed Hammerstein integral equations with applications, \textit{Nonlinear Anal. Real World Appl.}, \textbf{33} (2017), 317--347.

\bibitem{Conti} R. Conti,
Recent trends in the theory of boundary value problems for ordinary differential equations,
\textit{Boll. Un. Mat. Ital.}, \textbf{22} (1967), 135--178.

\bibitem{Chris-bj} C. S. Goodrich,  Pointwise conditions for perturbed Hammerstein integral equations with monotone nonlinear, nonlocal elements, \textit{Banach J. Math. Anal.}, \textbf{14} (2020), 290--312.

\bibitem{guolak} D. Guo and V. Lakshmikantham,
\textit{Nonlinear problems in abstract cones}, Academic Press, Boston,
1988. 

\bibitem{gi-mc}
G. Infante, A short course on positive solutions of systems of ODEs via fixed point index, in \textit{Lecture Notes
in Nonlinear Analysis (LNNA)}, \textbf{16} (2017), 93--140.

\bibitem{gi-ptrsa21}
G. Infante,
Positive solutions of systems of perturbed Hammerstein integral equations with arbitrary order dependence,
\textit{Philos. Trans. Roy. Soc. A}, \textbf{379} (2021), no. 2191, Paper No. 20190376, 10 pp.

\bibitem{gi-BK}
G. Infante, Eigenvalues of elliptic functional differential systems via a Birkhoff-Kellogg type theorem, \textit{Mathematics}, \textbf{9} (2021), n.~4.

\bibitem{gipp-cantilever} G. Infante and P. Pietramala,
A cantilever equation with nonlinear boundary conditions,
\textit{Electron. J. Qual. Theory Differ. Equ., Spec. Ed. I},
\textbf{15} (2009), 1--14.

\bibitem{kttmna} G. L. Karakostas and P. Ch. Tsamatos, Existence of multiple
positive solutions for a nonlocal boundary value problem,
\textit{Topol. Methods Nonlinear Anal.}, \textbf{19} (2002),
109--121.

\bibitem{aklb21}
A. Khanfer and L. Bougoffa,
A cantilever beam problem with small deflections and perturbed boundary data,
\textit{J. Funct. Spaces}, \textbf{2021}, Article ID 9081623, 9 p. (2021).

\bibitem{Kra-Lady}
M. A. Krasnosel'ski\u{i} and L. A. Lady\v{z}enski\u{\i}, The structure of the spectrum of positive nonhomogeneous operators, \textit{Trudy Moskov. Mat. Ob\v{s}\v{c}}, \textbf{3} (1954), 321--346.

\bibitem{li}
Y. Li, Existence of positive solutions for the cantilever beam
equations with fully nonlinear terms, \textit{Nonlinear Anal. Real
World Appl.}, \textbf{27} (2016), 221--237.

\bibitem{tofu}
T. F. Ma, Positive solutions for a beam equation on a nonlinear elastic foundation, \textit{Math. Comput. Modelling}, \textbf{39} (2004), 1195--1201.

\bibitem{rma}
R. Ma, A survey on nonlocal boundary value problems, \textit{Appl.
Math. E-Notes}, \textbf{7} (2007), 257--279.

\bibitem{sotiris}
S. K. Ntouyas, Nonlocal initial and boundary value problems: a
survey, \textit{Handbook of differential equations: ordinary
differential equations. Vol. II}, Elsevier B. V., Amsterdam,
(2005), 461--557.

\bibitem{Picone} M. Picone, Su un problema al contorno nelle equazioni differenziali lineari ordinarie del secondo ordine,
\textit{Ann. Scuola Norm. Sup. Pisa Cl. Sci.}, \textbf{10} (1908), 1--95.

\bibitem{Stik} A. \v{S}tikonas, A survey on stationary problems, Green's functions and spectrum of Sturm-Liouville problem
with nonlocal boundary conditions, \textit{Nonlinear Anal. Model.
Control}, \textbf{19} (2014), 301--334.

\bibitem{Webb-Cpt}
J. R. L. Webb, Compactness of nonlinear integral operators with discontinuous and with singular kernels, \textit{J. Math. Anal. Appl.}, \textbf{509} (2022),  Paper No. 126000, 17 pp.

\bibitem{jw-gi-jlms} J. R. L. Webb and G. Infante,
Positive solutions of nonlocal boundary value problems: a unified
approach, \textit{J. London Math. Soc.}, \textbf{74} (2006),
673--693.

\bibitem{mwylgl21}
M. Wei, Y. Li and G. Li, Lower and upper solutions method to the fully elastic cantilever beam equation with support,
\textit{Adv. Difference Equ.}, \textbf{2021}, 301 (2021).


\bibitem{Whyburn}
W. M. Whyburn, Differential equations with general boundary
conditions, \textit{Bull. Amer. Math. Soc.}, \textbf{48} (1942),
692--704.

\bibitem{ycy-aml}
L. Yang, H. Chen and X. Yang, 
The multiplicity of solutions for fourth-order equations generated from a boundary condition,
\textit{Appl. Math. Lett.}, \textbf{24} (2011), 1599--1603.

\bibitem{yao-cant}
Q. Yao, Monotonically iterative method of nonlinear cantilever beam equations,
\textit{Appl. Math. Comput.}, \textbf{205} (2008), 432--437.

\end{thebibliography}
\end{document}